\theoremstyle{plain}
\newtheorem{theorem}{Theorem}[section]
\newtheorem{proposition}[theorem]{Proposition}
\theoremstyle{definition}
\newtheorem{remark}[theorem]{Remark}
\numberwithin{equation}{section}
\newcommand{\R}{\mathbb{R}}
\newcommand{\e}{\varepsilon}
\newcommand{\abs}[1]{\left\lvert#1\right\rvert}
\newcommand{\plaplacian}{\Delta_p}
\newcommand{\norm}[1]{\|{#1}\|}
\begin{document}
	
\title[Boundedness of stable solutions to nonlinear equations]{Boundedness of stable solutions to nonlinear equations involving the $p$-Laplacian}

\author{Pietro Miraglio}

\address{P. Miraglio \textsuperscript{1,2},
	\newline 
	\textsuperscript{1} Universit\`a degli Studi di Milano, Dipartimento di Matematica, via Cesare Saldini 50, 20133 Milano, Italy
	\newline
	\textsuperscript{2} Universitat Polit\`ecnica de Catalunya, Departament
de Matem\`{a}tiques, Diagonal 647,
08028 Barcelona, Spain}
\email{pietro.miraglio@upc.edu}

\keywords{}


\thanks{P.M. is supported by the MINECO grant MTM2017-84214-C2-1-P and is part of	the Catalan research group 2017SGR1392.
}

\begin{abstract} We consider stable solutions to the equation $ -\Delta_p u =f(u) $ in a smooth bounded domain $ \Omega \subset \R^n $ for a $ C^1 $ nonlinearity $f$. Either in the radial case, or for some model nonlinearities $f$ in a general domain, stable solutions are known to be bounded in the optimal dimension range $n<p+4p/(p-1)$. In this article, under a new condition on $n$ and $p$, we establish an $ L^\infty $ a priori estimate for stable solutions which holds for every $ f\in C^1$. Our condition is optimal in the radial case for $n\geq3$, whereas it is more restrictive in the nonradial case. This work improves the known results in the topic and gives a unified proof for the radial and the nonradial cases.
	
The existence of an $L^\infty$ bound for stable solutions holding for all $C^1$ nonlinearities when $n<p+4p/(p-1)$ has been an open problem over the last twenty years. The forthcoming paper \cite{CMS} by Cabré, Sanchón, and the author will solve it when $p>2$.
\end{abstract}

\maketitle
\vspace{-0.5cm}
\section{Introduction}
For a smooth bounded domain $ \Omega\subset\R^n $, a $ C^1 $ nonlinearity $f$ and for every $ p\in(1,+\infty)$, we consider the elliptic equation involving the $p$-Laplacian
\begin{equation}\label{plap_eq}
-\plaplacian u=-\text{div}(\abs{\nabla u}^{p-2}\nabla u)= f(u) \qquad \text{in}\,\,\Omega,
\end{equation}
and the associated Dirichlet problem
\begin{equation}
	\label{plap_Dir}
	\left\{
	\begin{array}{rcll}
		-\plaplacian u&=& f(u) & \qquad \text{in}\,\,\Omega \\
		u&>&0 & \qquad\text{in}\,\,\Omega\\
		u&=&0 & \qquad\text{on}\,\,\partial\Omega.\\
	\end{array}\right.
\end{equation}
Solutions $u\in W^{1,p}(\Omega)$ to equation \eqref{plap_eq} correspond to critical points of the functional
\[
\mathcal{E}(u):=\int_\Omega \left(\frac1p\abs{\nabla u}^p-F(u)\right)\,dx,
\]
where $ F'(t)=f(t) $, and the boundary condition in \eqref{plap_Dir} is intended in the weak sense as $ u\in W^{1,p}_0(\Omega)$. Stable solutions to \eqref{plap_eq} are those for which the second variation of energy $ \mathcal{E} $ is nonnegative. 
More precisely, a solution $u\in C^{1}(\overline{\Omega})$ to \eqref{plap_eq} is said to be stable if
\begin{equation*}
\int_{\Omega\cap\left\{\abs{\nabla u}>0\right\}}\left\{\abs{\nabla u}^{p-2}\abs{\nabla\xi}^2+(p-2)\abs{\nabla u}^{p-4}\left(\nabla u\cdot\nabla\xi\right)^2\right\}\,dx-\int_\Omega f'(u)\xi^2\,dx\geq0
\end{equation*}
for every $ \xi\in \mathcal{T}_u $, defined in \cite{CasSa,FSV} as
\begin{equation*}
\mathcal{T}_u:=
\begin{aligned}
\begin{cases}
W_{\sigma,0}^{1,2}(\Omega)\qquad&\text{if}\,\,p\geq2;
\\
\{\xi\in W_0^{1,2}(\Omega):\,\,\norm{\nabla\xi}_{L_\sigma^2(\Omega)}<\infty	\} \qquad&\text{if}\,\,p\in(1,2).
\end{cases}
\end{aligned}
\end{equation*}
Here and throughout the paper, $\norm{\cdot}_{L_\sigma^2(\Omega)}$ is the weighted $L^2(\Omega)$ norm with weight~$\sigma=\abs{\nabla u}^{p-2}$, and $W_{\sigma,0}^{1,2}(\Omega)$ is defined as the completion of $C_c^1(\Omega)$ with respect to the norm
\begin{equation*}
\begin{split}
\norm{\xi}_{W_\sigma^{1,2}(\Omega)}:&=\norm{\xi}_{L^2(\Omega)}+\norm{\nabla\xi}_{L_\sigma^2(\Omega)}
\\
&=\left(\int_\Omega\xi^2\,dx\right)^\frac12+\left(\int_\Omega\abs{\nabla u}^{p-2}\abs{\nabla\xi}^2\,dx	\right)^\frac12.
\end{split}
\end{equation*}
See the beginning of section 4 in \cite{CasSa} for more details about the class $\mathcal{T}_u$ of test functions. In short, it is important to stress that, with this definition, $\mathcal{T}_u$ is a Hilbert space. The difference in defining the class is due to the fact that if $ p\geq2 $ then $ \sigma\in L^\infty(\Omega) $, while this is not true when $p\in(1,2)$.

We say that $u$ is a \textit{regular solution} to \eqref{plap_Dir} if it solves the equation in the distributional sense and $ f(u)\in L^\infty(\Omega)$.
Every regular solution is proved to be~$ C^{1,\beta}(\overline{\Omega}) $ --- see \cite{DiB,L,T} --- and
this is the best regularity that one can expect for solutions to nonlinear equations involving the $p$-Laplacian.

In this paper we focus on the boundedness of stable solutions to \eqref{plap_eq}, or to the associated Dirichlet problem \eqref{plap_Dir}, for general nonlinearities $ f\in C^1$. The importance of this problem for the classical Laplacian --- when $ p=2 $ --- has been stressed by Haïm Brezis since the mid-nineties --- see \cite{B,BV}. Very recently, it has been completely solved by Cabré, Figalli, Ros-Oton, and Serra \cite{CFRS}, proving that stable solutions are bounded whenever $n\leq9$. This result is indeed optimal, since explicit examples of unbounded stable solutions to \eqref{plap_eq} with $p=2$ are well-known when $n\geq10$.

The boundedness of stable solutions to \eqref{plap_Dir} is conjectured to hold under the assumption $ n<p+4p/(p-1) $. In fact, when $ n\geq p+4p/(p-1)$, $\Omega$ is a ball and $ f(u)=e^u $, García Azorero, Peral, and Puel showed in \cite{GPP} the existence of an unbounded stable solution to~\eqref{plap_Dir}. On the other hand, considering radial solutions to \eqref{plap_Dir} in a ball, Cabré, Capella, and Sanchón proved in \cite{CCaSa} the boundedness of stable solutions, provided that $ n<p+4p/(p-1) $. In the nonradial case and for general nonlinearities, the optimal result will be achieved in the forthcoming paper~\cite{CMS} by Cabré, Sanchón, and the author, assuming that $p>2$ and the domain is strictly convex. This is done extending to the $p$-Laplacian framework  some of the techniques used in~\cite{CFRS}. In the present work we do not use any idea or method developed in \cite{CFRS}.
The papers \cite{GPP, CCaSa, CMS} are part of an extensive literature on the topic, which is outlined in subsection \ref{subsec_available}.

The aim of the present paper is to provide $ L^\infty $ a priori bounds for stable solutions to~\eqref{plap_Dir} under a certain condition over $n$ and $p$. 
In the nonradial case, our condition over $n$ and $p$ is not optimal, but it improves the known results in the field.
In the radial case, our proof gives for $n\geq3$ the optimal result in \cite{CCaSa} in an unified way with the one in general domains.
Furthermore, our technique is based on a geometric Hardy inequality on the level sets of the stable solution. 
This approach --- that we explain below in detail --- has been introduced by Cabré in \cite{C1} for the classical version $ p=2 $ of the problem and it has never been used before in the context of the $ p $-Laplacian.

\subsection{Available results}\label{subsec_available}Let us describe first the large literature for the classical case $ p=2 $, and then list the most important results for problem \eqref{plap_Dir} with $ p\in(1,+\infty) $.

The first paper about this topic is by Crandall and Rabinowitz in 1975 \cite{CR}, in which they study problem \eqref{plap_Dir} with $p=2$ for smooth nonlinearities $f$ satisfying
\begin{equation}\label{extremal_hp}
f(0)>0, \qquad f'\geq0, \qquad \lim_{t\to+\infty}\frac{f(t)}{t}=+\infty.
\end{equation}
These assumptions are verified for instance by exponential and power-type nonlinearities, as discussed in \cite{CR}.

Assuming that $f$ satisfies \eqref{extremal_hp}, we can introduce extremal solutions, which are nontrivial examples of stable solutions to \eqref{plap_Dir}, sometimes unbounded. In order to define them in the classical case, let us consider a positive parameter $\lambda>0 $ and the Dirichlet problem
\begin{equation}\label{extr_problem}
\left\{
\begin{array}{rcll}
-\Delta u&=& \lambda f(u) & \qquad \text{in}\,\,\Omega \\
u&=&0 & \qquad\text{on}\,\,\partial\Omega.\\
\end{array}\right.
\end{equation}
It is known the existence of an extremal parameter $ \lambda^*\in(0,+\infty)$ such that if $ \lambda\in(0,\lambda^*) $, then problem (\ref{extr_problem}) admits a regular solution $ u_\lambda $ which is minimal, while if~$ \lambda>\lambda^* $ then it admits no regular solution. In addition, the family $ \{u_\lambda\} $ is increasing in $ \lambda $, every $ u_\lambda $ is stable, and one can define the limit
\begin{equation}\label{extremal_def}
u^*:=\lim_{\lambda\rightarrow\lambda^*}u_\lambda.
\end{equation}

The function $ u^* $ is a weak\footnote{In the sense introduced by Brezis et al.\cite{BetA}: $u\in L^1(\Omega)$ is a weak solution of \eqref{extr_problem} if $f(u)\text{dist}(x,\partial\Omega)\in L^1(\Omega)$ and 
	\[
	\int_\Omega\left(u\Delta \varphi+\lambda f(u)\varphi\right)\,dx=0
	\]
	for every $\varphi\in C_0^2(\overline{\Omega})$.} solution of \eqref{extr_problem} with $ \lambda=\lambda^* $ and it is stable. Assuming also that $f$ is convex, $ u^* $ is the unique weak solution to \eqref{extr_problem} for $ \lambda=\lambda^* $. It is called the \emph{extremal solution} of problem \eqref{extr_problem} and its boundedness depends on the dimension, the domain and the nonlinearity. In \cite{BV} the authors raised several open question about the extremal solution, especially about its regularity, which can be deduced from its boundedness using classical tools in the theory of elliptic PDEs --- see also the open problems raised by Brezis in \cite{B}.

When $ f (u) = e^u $, Crandall and Rabinowitz prove in \cite{CR} that $ u^*\in L^\infty(\Omega)$ if $ n \leq 9 $, while $ u^*(x) = \log \abs{x}^{-2} $ when $\Omega=B_1$ and $ n \geq 10 $ --- see \cite{JL}. Similar results hold for $ f (u) = (1 + u)^m $, and also for functions $ f $ such that
\begin{equation}\label{strong_condition}
\lim_{t\to+\infty}\frac{f(t)f''(t)}{f'(t)} \qquad\text{exists},
\end{equation}
as proved also in \cite{CR}.

We will describe now some $ L^\infty$ a priori estimates which have been proved for the smooth stable solutions $ u_\lambda $ to \eqref{extr_problem} with $ \lambda<\lambda^* $, under different assumptions on $f$. The estimates are uniform in $ \lambda $ and they led, by letting $ \lambda\nearrow\lambda^* $, to the boundedness of the extremal solution. Since the proofs work for every smooth stable solution to~\eqref{plap_Dir} with $ p=2 $ under the same assumptions on $f$, we describe here the results in the framework of stable solutions to \eqref{plap_Dir} with $ p=2 $.

Nedev obtained in \cite{N} an $ L^\infty $ bound for stable solutions in dimensions $ n =2, 3 $, under the hypothesis that $f$ is convex and satisfies \eqref{extremal_hp}. 
Some years later, Cabré and Capella \cite{CCa} solved the radial case for every Lipschitz nonlinearity, proving the boundedness of stable solutions when $ \Omega= B_1 $ and $ n \leq 9 $. 

In 2010 Cabré \cite{C} proved
that in dimensions $ n \leq 4 $ stable solutions are
bounded in every convex domain and for every $ C^1 $ nonlinearity.  
A few years later, Villegas~\cite{V} removed the convexity hypothesis about $ \Omega $ when $n=4$, by further assuming that $f$ is convex. Its proof uses both the results in \cite{C} and \cite{N},

The proof in \cite{C} is rather delicate and it is based on the Michael-Simon and Allard inequality on the level sets of a stable solution $u$. The same result has been proved very recently in \cite{C1} by the same author, using this time a Hardy inequality on the level sets of $u$. This new method is not only simpler, but it also gives a unified proof of the radial case --- in the optimal dimension range $ n\leq9 $ --- and of the nonradial case if $ n =3, 4 $, obtaining boundedness of stable solutions to \eqref{plap_Dir} with $p=2$ when $ \Omega$ is convex. 

In \cite{C,C1}, the $ L^\infty $ a priori bounds for stable solutions are obtained through an estimate in the interior of the domain combined later with some estimates near the boundary. The interior bounds hold for every regular domain $ \Omega $ and do not depend on the values of the stable solutions at the boundary. On the contrary, in order to have boundary estimates, the author needs to consider stable solutions to the Dirichlet problem \eqref{plap_Dir} with $p=2$ and also to assume the convexity of $ \Omega $. In the present paper, we follow the strategy of the second paper, \cite{C1}, extending it to the case of the $ p$-Laplacian.

As we mentioned above, very recently Cabré, Figalli, Ros-Oton, and Serra \cite{CFRS} settled the problem, proving that stable solutions are bounded in dimension $n\leq9$. The interior regularity applies to every nonnegative $f$, while the global result requires $f$ to be nondecreasing and convex. This was done by the authors using new and different ideas from the ones in~\cite{C,C1}. For more details about the classical problem for the Laplacian we refer to the recent survey~\cite{C2} and to the book~\cite{D}.

Before outlining in detail our results, we comment on what is known about the boundedness of stable solutions for the $ p $-Laplacian. Let us start by describing the extremal solutions for the problem
\begin{equation}\label{p_extr_problem}
\left\{
\begin{array}{rcll}
-\plaplacian u&=& \lambda f(u) & \qquad \text{in}\,\,\Omega \\
u&=&0 & \qquad\text{on}\,\,\partial\Omega,\\
\end{array}\right.
\end{equation}
where $ \lambda $ is a positive parameter and $f$ a $C^1$ nonlinearity. Under the assumptions
\begin{equation}\label{p_extremal_hp}
f(0)>0, \qquad f'\geq0, \qquad \lim_{t\to+\infty}\frac{f(t)}{t^{p-1}}=+\infty,
\end{equation}
there exists an extremal parameter $ \lambda^*\in(0,+\infty)$ such that if $ \lambda\in(0,\lambda^*) $, then problem \eqref{p_extr_problem} admits a minimal regular solution $ u_\lambda $, while if $ \lambda>\lambda^* $ then it admits no regular solution.  Furthermore, the family $ \{u_\lambda\} $ is increasing in $ \lambda $, every $ u_\lambda $ is stable and we can define $u^*$ as in \eqref{extremal_def} --- see \cite{CSa} for these results about the extremal problem for the $p$-Laplacian.

For $p\neq2$ and $f\in C^1$ satisfying \eqref{p_extremal_hp}, it is not known in general whether $u^*$ is a distributional solution of \eqref{p_extr_problem} with $\lambda=\lambda^*$. 
However, when $f$ is the exponential nonlinearity or it satisfies some strong assumptions --- see \cite{GP,GPP,CSa,San1} --- it has been proved that $u^*$ is a distributional solution to \eqref{p_extr_problem} with $\lambda=\lambda^*$. In this cases, it is called the \textit{extremal solution} of problem \eqref{p_extr_problem}.

As in the classical case, also for $ p>1 $ the integrability and regularity properties of $u^*$ are obtained as a consequence of uniform estimates for the stable branch $ \{u_\lambda\} $.

García Azorero, Peral, and Puel treated the exponential nonlinearity $ f(u)=e^u $ for~$ p>1 $ in \cite{GP,GPP}. They established the boundedness of stable solutions when 
\begin{equation}\label{optimal_dimension}
n<p+\frac{4p}{p-1},
\end{equation} 
and showed that this condition is optimal. Indeed, they provided an example of unbounded stable solution to \eqref{plap_Dir} with $ f(u)=e^u $, $ \Omega=B_1 $ and $ n\geq p+4p/(p-1) $.

Some years later, Sanchón proved in \cite{San} that stable solutions are bounded in the optimal dimension range \eqref{optimal_dimension}, under the hypothesis that $f\in C^2$ is an increasing function, it satisfies \eqref{p_extremal_hp} and also the strong assumption \eqref{strong_condition} on the behavior of $f$ at infinity. The same result is obtained by Cabré and Sanchón in \cite{CSa}, assuming that the nonlinearity satisfies~\eqref{p_extremal_hp} and the power growth hypothesis $f(t)\leq c(1+t)^m$, where $m$ is smaller than a ``Joseph-Lundgren type" exponent which is optimal for the regularity of stable solutions.

As we mentioned above, the radial case of problem \eqref{plap_Dir} was settled by Cabré, Capella, and Sanchón in \cite{CCaSa} for every locally Lipschitz nonlinearity. Indeed, under this assumption they proved that radial stable solutions are bounded in the optimal range $n <p+4p/(p-1)$.

Back to the nonradial case, the following works deal with general nonlinearities satisfying essentially \eqref{p_extremal_hp}. They are also the most recent results in the topic.

Sanchón in \cite{San,San1} considers nonlinearities $f$ that satisfy \eqref{p_extremal_hp} and
\begin{equation}\label{p_convexity}
\text{there exists}\,\,\, T\geq0\,\,\,\text{such that}\,\,\,(f(t)-f(0))^\frac{1}{p-1} \,\,\, \text{is convex for all}\,\,\,t\geq T.
\end{equation}
Observe that when $p=2$ this last condition becomes the standard convexity assumption on $f$ made in \cite{BV} and appearing also in the recent paper \cite{CFRS}.

In \cite{San,San1} it is proved the boundedness of stable solutions whenever
\begin{equation}\label{San_condition}
\begin{cases}
n<p+\frac{p}{p-1}\qquad &\text{and}\qquad p\geq2;
\\
n\leq p+\frac{2p}{p-1}(1+\sqrt{2-p})\qquad &\text{and}\qquad p\in(1,2).
\end{cases}
\end{equation}
Both results are obtained following the approach of Nedev in \cite{N} for $p=2$. Later, Castorina and Sanch\'on \cite{CasSa} extended Cabré's method in \cite{C} for $p=2$ to the case of the $p$-Laplacian, proving that stable solutions are bounded in the range
\begin{equation}\label{Cas_San_condition}
n\leq p+2
\end{equation}
under the assumption that $f$ is $C^1$, and satisfies~\eqref{p_extremal_hp} and \eqref{p_convexity}.

In the forthcoming paper \cite{CMS} by Cabré, Sanchón, and the author, the interior results in \cite{CFRS} for $p=2$ will be extended to the case of the $p$-Laplacian. In particular, we will prove that stable solutions are bounded in the optimal dimension range $n<p+4p/(p-1)$ whenever $p>2$ and $\Omega$ is strictly convex.
\subsection{New results and strategy of the proof}
Theorem \ref{thm_main} below is the main result of the present article. It establishes, under a new condition on~$n$ and~$p$, an~$L^\infty$ a priori estimate for stable solutions for every $C^1$ nonlinearity. This condition improves the one in \cite{CasSa}, \eqref{Cas_San_condition}, when $n\geq4$ and $p>2$, even though it is not optimal.

Our result consists of an interior estimate for stable solutions which does not depend on the boundary values of the function and holds for every~$C^1$ nonlinearity and every bounded domain --- see \eqref{interior_bound} below. Up to our knowledge, ours is the first result of this kind for stable solutions to $ \eqref{plap_eq} $ in the setting of the $p$-Laplacian.

This interior estimate leads to a global $L^\infty$ estimate under the further assumption that the domain is strictly convex and that $u$ is a stable solution of the Dirichlet problem \eqref{plap_Dir}, and not only of equation \eqref{plap_eq} --- see \eqref{global_bound} below.

\begin{theorem}\label{thm_main}
	Let $f$ be any $ C^1 $ nonlinearity, $ \Omega\subset\R^n$ a bounded smooth domain, $ p\in(1,+\infty) $, and $u$ a regular stable solution to \eqref{plap_eq}. Assume that
	\begin{equation}\label{condition}
	\begin{aligned}
	&n\geq4 \quad\text{and}\quad n<\frac12\left(\sqrt{(p-1)(p+7)}+p+5\right);
	\\
	\text{or}\quad &n=3\quad\text{and}\quad p<3.
	\end{aligned} 	
	\end{equation}
	
	\begin{itemize}
		\item[(i)] Then, for every $ \delta>0 $, we have that
		\begin{equation}\label{interior_bound}
		\norm{u}_{L^\infty(K_\delta)}\leq C\left(\norm{u}_{L^1(\Omega)}+\norm{\nabla u}_{L^p(\Omega\setminus K_\delta)}\right),
		\end{equation}
		where
		\[
		K_\delta:=\left\{x\in\Omega:\, \textnormal{dist}(x,\partial\Omega)\geq\delta	\right\},
		\]
		and $C$ is a constant depending only on $ \Omega $, $ \delta $, and $p$.
		
		\item[(ii)] If in addition $ \Omega $ is strictly convex, $ u $ is a positive solution of the Dirichlet problem \eqref{plap_Dir}, and $f$ is positive, then
		\begin{equation}\label{global_bound}
		\norm{u}_{L^\infty(\Omega)}\leq C,
		\end{equation}
		where $ C $ is a constant depending only on $ \Omega $, $p$, $f$, and $\norm{u}_{L^1(\Omega)}$.
		
		\item[(iii)] If $ \Omega $ is a ball and $f$ is strictly positive in $(0,+\infty)$, then both \eqref{interior_bound} and \eqref{global_bound} hold if
		\begin{equation}\label{radial_condition}
		\begin{aligned}
			&n\geq3\quad\text{and}\quad n<p+\frac{4p}{p-1};
			\\
			&n=2\quad\text{and}\quad p\in(1,3).
		\end{aligned} 	
		\end{equation}
	\end{itemize}
\end{theorem}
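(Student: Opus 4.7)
My plan is to adapt the Hardy-inequality strategy introduced by Cabr\'e in \cite{C1} (for $p=2$) to the $p$-Laplacian, as the author essentially anticipates in the introduction. The central idea is to combine the stability inequality (tested within the admissible class $\mathcal{T}_u$) with the equation satisfied by $|\nabla u|$, obtained by differentiating \eqref{plap_eq}, in order to eliminate the $f'(u)$ term and recover a purely geometric $L^2$-estimate on the level sets $\{u=t\}$. A geometric Hardy inequality on these hypersurfaces then converts this estimate into a weighted bound near an interior point $x_0$, and the coarea formula upgrades it to the pointwise estimate \eqref{interior_bound}.

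Concretely, I would first test stability with $\xi=|\nabla u|^{\alpha}\eta$ for an exponent $\alpha$ to be optimized and a Lipschitz cut-off $\eta$, and simultaneously test the linearized equation for $|\nabla u|$ against $|\nabla u|^{2\alpha-1}\eta^2$; subtracting cancels the $\int f'(u)\xi^2$ contribution. Invoking the Sternberg--Zumbrun pointwise identity
\[
|D^2 u|^2-\bigl|\nabla|\nabla u|\bigr|^2=|A|^2|\nabla u|^2+\bigl|\nabla_T|\nabla u|\bigr|^2\qquad\text{on }\{|\nabla u|>0\},
\]
where $A$ denotes the second fundamental form of the level set through $x$ and $\nabla_T$ the tangential gradient, the remaining inequality has the schematic form
\[
\int |\nabla u|^{p-2+2\alpha}\bigl(|A|^2+\bigl|\nabla_T\log|\nabla u|\bigr|^2\bigr)\eta^2\,dx \leq C\int |\nabla u|^{p+2\alpha}|\nabla\eta|^2\,dx,
\]
with the extra $p$-Laplacian quadratic form contributing positively and handled separately according to the sign of $p-2$.

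Next, for $x_0\in K_{2\delta}$ I would choose $\eta(x)=\varphi(x)\,|x-x_0|^{-\beta}$ and slice the preceding inequality along the level sets $\Sigma_t=\{u=t\}$ via the coarea formula. On each $\Sigma_t$, a smooth $(n-1)$-dimensional hypersurface off the critical set, I apply a codimension-one Hardy inequality of the form
\[
\int_{\Sigma_t}\frac{\varphi^2}{|x-x_0|^2}\,d\mathcal{H}^{n-1}\leq C(n)\int_{\Sigma_t}\bigl(|\nabla_T\varphi|^2+|A|^2\varphi^2\bigr)\,d\mathcal{H}^{n-1},
\]
available when $\dim \Sigma_t\geq 3$, i.e.\ $n\geq 4$; the case $n=3$ requires a logarithmic refinement, which is why it is covered only for $p<3$. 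Balancing $\alpha$ and $\beta$ produces a weighted integral $\int |\nabla u|^{p}|x-x_0|^{-\lambda}\,dx$ controlled by $\norm{u}_{L^1(\Omega)}+\norm{\nabla u}_{L^p(\Omega\setminus K_\delta)}$, and the constraint that the attainable exponent $\lambda$ be large enough to imply a pointwise bound on $u(x_0)$ by radial integration of $|\nabla u|$ is exactly what forces condition \eqref{condition}.

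Finally, part (ii) would follow by combining (i) with a boundary argument on strictly convex domains: the moving-planes method for the $p$-Laplacian yields a uniform tubular neighbourhood of $\partial\Omega$ in which $u$ is controlled by its supremum on $K_\delta$, so the interior estimate globalizes. Part (iii) is the radial specialization, where the level sets are concentric spheres, $|A|^2$ is explicit, and the Hardy inequality on $\mathbb{S}^{n-1}$ is sharp with constant $(n-2)^2/4$ for $n\geq 3$; the same optimization of $(\alpha,\beta)$ then delivers the optimal range \eqref{radial_condition}, recovering the result of \cite{CCaSa}. The main obstacle, and the reason \eqref{condition} is stricter than \eqref{radial_condition} in the nonradial case, is the absence of sharp pointwise control on $|A|^2$ on general level sets---unlike the radial setting where curvatures are explicit. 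Secondary technical points---the degeneracy at $\{|\nabla u|=0\}$ and, for $p\in(1,2)$, the unboundedness of the weight $\sigma=|\nabla u|^{p-2}$---are handled by approximation within the Hilbert class $\mathcal{T}_u$.
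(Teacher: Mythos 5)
Your overall strategy (a Sternberg--Zumbrun/Farina--Sciunzi--Valdinoci stability inequality combined with a geometric Hardy inequality on the level sets, a Riesz-potential step to pass from a weighted gradient bound to a pointwise bound, and moving planes near the boundary of a strictly convex domain for part (ii)) is indeed the one the paper follows. However, there are concrete gaps. The Hardy inequality you invoke, $\int_{\Sigma_t}\varphi^2|x-x_0|^{-2}\,d\mathcal{H}^{n-1}\leq C(n)\int_{\Sigma_t}\bigl(|\nabla_T\varphi|^2+|A|^2\varphi^2\bigr)\,d\mathcal{H}^{n-1}$ ``for $\dim\Sigma_t\geq3$'', is not the tool that makes the argument work. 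The paper uses Cabr\'e's foliated Hardy inequality (Theorem \ref{thm_hardy}), which holds for every $n\geq2$ and, crucially, for a whole family of weights $r^{-\alpha}$ with $\alpha\in[0,n-1)$, and whose right-hand side involves the mean curvature $H$, not $|A|$. The freedom in $\alpha$ is essential: the potential estimate $|u(y)-\overline{u}_{B_y}|\leq C\int_\Omega|\nabla u|\,r_y^{1-n}\,dx$ followed by H\"older with exponents $p,p'$ requires $\alpha>n-p$, while compatibility with the stability inequality (after inserting $\eta=r_y^{(2-\alpha)/2}\zeta$ and using $H^2\leq(n-1)|A|^2$, or $H^2\leq(p-1)|A|^2$ when $n\leq p$) requires $4(n-1-\alpha)^2>(\alpha-2)^2(n-1)(p-1)$, respectively $4(n-1-\alpha)^2>(\alpha-2)^2(p-1)^2$. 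Verifying that a nonnegative $\alpha\in(n-p,n-1)$ satisfying both constraints exists is precisely where condition \eqref{condition} comes from, and this computation---carried out in the paper's appendix---is absent from your sketch; ``balancing $\alpha$ and $\beta$'' at this point is the entire quantitative content of the theorem.

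Moreover, your explanations of the hypotheses do not reflect the actual mechanism. The restriction $p<3$ when $n=3$ is not due to a logarithmic refinement of a low-dimensional Hardy inequality; it is simply the case $n=3$ of the algebraic inequality $4(p-1)>(p-n+2)^2(n-1)$. Likewise, the gap between \eqref{condition} and the radial condition \eqref{radial_condition} is not the absence of pointwise control on $|A|^2$: in the radial case one gains because the two terms on the left of the Hardy inequality coincide ($I=I_r$), so the full constant $n-1$ survives instead of $n-1-\alpha$, and because $\nabla_T|\nabla u|=0$ and $H^2=(n-1)|A|^2$ exactly. Finally, for part (ii) it does not suffice to control $u$ near $\partial\Omega$ (the Castorina--Sanch\'on measure estimate gives $\norm{u}_{L^\infty(\Omega\setminus K_\delta)}\leq\gamma^{-1}\norm{u}_{L^1(\Omega)}$); one must also bound $\norm{\nabla u}_{L^p(\Omega\setminus K_\delta)}$, which appears on the right-hand side of \eqref{interior_bound}, and this requires invoking $C^{1,\beta}$ regularity up to the boundary once $f(u)$ is bounded near $\partial\Omega$---a step your sketch omits.
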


\begin{remark}\label{rmk_condition}
	We point out that condition~\eqref{condition} forces $p>2$ for $n\geq5$, and  $p>4/3$ for $n=4$.
	Furthermore, our condition \eqref{condition} improves \eqref{Cas_San_condition} for~$n\geq4$, since $p+2<(\sqrt{(p-1)(p+7)}+p+5)/2$.
\end{remark}

The interior estimate~\eqref{interior_bound} does not require any assumption on the values of $u$ at the boundary of $\Omega$, nor the strict convexity of the domain.
On the other hand, passing from~\eqref{interior_bound} to the global bound~\eqref{global_bound} requires some boundary estimates, which are available if we assume that the domain is strictly convex, $u$ is a positive stable solution to the Dirichlet problem \eqref{plap_Dir}, and $f$ is positive. We will introduce the boundary estimates in Section~\ref{sec_proof}, before the proof of Theorem \ref{thm_main}.

Our main result Theorem \ref{thm_main} is obtained as a consequence of the following proposition. It is an estimate of the weighted $ L^p $ norm of the gradient of $u$ in $ \Omega $, being controlled by the $L^p$ norm of the gradient of $u$ in a small neighborhood of the boundary of the domain.

\begin{proposition}\label{prop_interior_estimate}
	Let $f$ be any $ C^1 $ nonlinearity, $ \Omega\subset\R^n$ a bounded smooth domain, $ p\in(1,+\infty) $, and $u$ a regular stable solution to \eqref{plap_eq}. Let $ \alpha\in[0,n-1)$ satisfy
	\begin{equation}\label{alpha_condition}
	\begin{aligned}
	&4(n-1-\alpha)^2>(\alpha-2)^2(n-1)(p-1) 
	\qquad&\text{if}\,\,n> p;
	\\
	& 4(n-1-\alpha)^2>(\alpha-2)^2(p-1)^2 
	\qquad&\text{if}\,\,n\leq p.
	\end{aligned} 	
	\end{equation}
	
	Then, for all $ \delta>0 $ and $ y\in K_\delta:=\{x\in\Omega :\,\textnormal{dist}(x,\partial\Omega)\geq\delta \} $, it holds that
	\begin{equation}\label{interior_ineq}
	\int_{\Omega}\abs{\nabla u(x)}^p\abs{x-y}^{-\alpha}\,dx\leq C\norm{\nabla u}^p_{L^p(\Omega\setminus K_\delta)},
	\end{equation}
	where $ C $ is a constant depending only on $ \Omega $, $ \delta $, $p$, and $ \alpha $.
	
	If $ \Omega $ is a ball and $f$ is strictly positive in $(0,+\infty)$, then \eqref{interior_ineq} holds with $y=0$ if, instead of \eqref{alpha_condition}, we assume that $ \alpha\in[0,n-1)$ satisfies
	\begin{equation}\label{radial_alpha_condition}
	\begin{aligned}
	&4(n-1)>(\alpha-2)^2(p-1) 
	\qquad&\text{if}\,\,n> p;
	\\
	&4(n-1)^2>(\alpha-2)^2(p-1)^2
	\qquad&\text{if}\,\,n\leq p.
	\end{aligned} 	
	\end{equation}
\end{proposition}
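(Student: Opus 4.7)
The plan is to combine two ingredients in the spirit of Cabré's approach in~\cite{C1}: first, a Sternberg--Zumbrun type identity for $p$-stable solutions (originating in~\cite{FSV} and already adapted to the $p$-Laplacian in~\cite{CasSa}); second, a geometric weighted Hardy inequality with singular weight $|x-y|^{-\alpha}$ on each level set $\Sigma_t=\{u=t\}$, which transfers to an ambient inequality via the coarea formula.

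\textbf{Step 1: stability and the Sternberg--Zumbrun inequality.} First I would plug the test function $\xi=|\nabla u|\eta$ (regularized on $\{|\nabla u|=0\}$ by the usual $\sqrt{|\nabla u|^2+\e}$ device) into the stability inequality, where $\eta\in C_c^1(\Omega)$ is a cutoff with $\eta\equiv 1$ on $K_\delta$ and $\mathrm{supp}\,\eta\subset\Omega$. Expanding, integrating by parts, and using the identity for $p$-Laplace equations (as in \cite{FSV,CasSa}), one obtains the geometric stability inequality
\begin{equation*}
\int_{\Omega\cap\{|\nabla u|>0\}}\Big(|\nabla u|^p\,|A|^2+(p-1)\,|\nabla u|^{p-2}\,|\nabla_T|\nabla u||^2\Big)\eta^2\,dx\ \le\ C_p\int_\Omega |\nabla u|^p\,|\nabla\eta|^2\,dx,
\end{equation*}
where $|A|$ is the norm of the second fundamental form and $\nabla_T$ the tangential gradient along the regular level sets $\Sigma_t$.

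\textbf{Step 2: Hardy inequality on closed level sets.} For a closed smooth hypersurface $\Sigma\subset\R^n$ and $y\notin\Sigma$, I would apply the tangential divergence theorem to $X=(x-y)|x-y|^{-\alpha}$. A direct computation gives $\mathrm{div}_\Sigma X = (n-1-\alpha)|x-y|^{-\alpha}+\alpha|(x-y)\cdot\nu|^2|x-y|^{-\alpha-2}$, and integrating $\varphi^2\,\mathrm{div}_\Sigma X$ on $\Sigma$ (no boundary) against the identity $\int_\Sigma \mathrm{div}_\Sigma(\varphi^2 X)=\int_\Sigma H\varphi^2\langle X,\nu\rangle$ produces, after Cauchy--Schwarz and Young, a weighted Hardy inequality of the form
\begin{equation*}
(n-1-\alpha)\int_\Sigma \frac{\varphi^2}{|x-y|^\alpha}\,d\mathcal{H}^{n-1}\ \le\ \e_1\int_\Sigma \frac{H^2\varphi^2}{|x-y|^{\alpha-2}}+\e_2\int_\Sigma\frac{|\nabla_T\varphi|^2}{|x-y|^{\alpha-2}} + \text{absorbable terms}.
\end{equation*}
Using the pointwise relation $H^2\le(n-1)|A|^2$ (or in the regime $n\le p$ the cruder $H^2\le(p-1)^2|\nabla_T\log|\nabla u||^2$-type bound), this level-set inequality can be applied with $\varphi=|\nabla u|^{p/2}\eta\,|x-y|^{(2-\alpha)/2}$.

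\textbf{Step 3: coarea, absorption, and the condition on $\alpha$.} Integrating the level-set inequality against $dt$ and using the coarea formula converts it into an ambient inequality whose right-hand side contains exactly the two quantities $|\nabla u|^p|A|^2\eta^2$ and $|\nabla u|^{p-2}|\nabla_T|\nabla u||^2\eta^2$ controlled by Step~1, plus boundary/cutoff terms of the form $|\nabla u|^p|\nabla\eta|^2$. The absorption succeeds precisely when the product of the Young constants from Step~2 fits inside the $(p-1)$ factor from Step~1, and this is what produces the quadratic condition $4(n-1-\alpha)^2>(\alpha-2)^2(n-1)(p-1)$ (respectively $(p-1)^2$ when $n\le p$, because then the estimate $H^2\le(n-1)|A|^2$ is wasteful and one uses the tangential-gradient pathway instead). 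Picking $\eta$ so that $|\nabla\eta|$ is supported in $\Omega\setminus K_\delta$ yields \eqref{interior_ineq}. In the radial case $\Sigma_t$ is a sphere with $|A|^2=(n-1)/r^2$ and zero tangential gradient of $|\nabla u|$, so Step~2 reduces to a one-dimensional Hardy inequality whose sharp constant is only quadratic rather than quartic in $(\alpha-2)$, giving the better condition \eqref{radial_alpha_condition}.

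\textbf{Main obstacle.} The delicate point is the bookkeeping of constants in Step~3: one must track simultaneously the $(p-1)$ from the stability inequality, the $(n-1)$ from $H^2\le(n-1)|A|^2$, the Young parameters $\e_1,\e_2$ from Step~2, and an extra inner Young inequality arising when $\varphi$ is expanded (so that $|\nabla_T\varphi|^2$ splits into $|\nabla_T|\nabla u||^2$ and $|x-y|^{-2}$ pieces). Only a joint optimization of these parameters yields the quadratic condition~\eqref{alpha_condition}; any sub-optimal splitting gives a strictly worse range. A secondary technical obstacle is the rigorous handling of the critical set $\{|\nabla u|=0\}$, where $\Sigma_t$ may fail to be smooth; this is resolved by working with regular values $t$ (Sard) and passing to the limit in the $\e$-regularization of $|\nabla u|$, exactly as in \cite{CasSa,FSV}.
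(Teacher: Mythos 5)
Your strategy is the paper's: combine the Farina--Sciunzi--Valdinoci geometric stability inequality (Theorem \ref{thm_SZ_stability}) with Cabr\'e's Hardy inequality on the level sets (Theorem \ref{thm_hardy}), applied to a power of $\abs{\nabla u}$ times a cutoff, and absorb the main term using the condition on $\alpha$. Re-deriving the Hardy inequality level set by level set via the tangential divergence theorem applied to $X=(x-y)\abs{x-y}^{-\alpha}$ and then integrating in $t$ by coarea is precisely how the ambient statement \eqref{hardy_ineq} is obtained, so there is no genuinely different route here. Two of your load-bearing details are however wrong as written.

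First, the test function. Since $\int dt\int_{\Sigma_t}\varphi^2\,d\mathcal{H}^{n-1}=\int_\Omega\varphi^2\abs{\nabla u}\,dx$, producing $\int_\Omega\abs{\nabla u}^p r_y^{-\alpha}\,dx$ requires $\varphi=\abs{\nabla u}^{(p-1)/2}\eta$ (as in the paper), not $\abs{\nabla u}^{p/2}\eta\,\abs{x-y}^{(2-\alpha)/2}$; moreover, the radial weight must not also be inserted into $\varphi$ when it is already present in your Step-2 inequality, otherwise you double-count it. Second, and more seriously, your mechanism for the case $n\le p$ --- a pointwise bound of the type $H^2\le(p-1)^2\abs{\nabla_T\log\abs{\nabla u}}^2$ --- is false: for a radial solution the right-hand side vanishes identically while $H=(n-1)/r\neq0$. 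The correct argument keeps $H^2\le(n-1)\abs{A}^2\le(p-1)\abs{A}^2$ when $n\le p$, so that the common factor $(p-1)$ can be pulled out of $(p-1)^2\abs{\nabla u}^{p-2}\abs{\nabla_T\abs{\nabla u}}^2+(p-1)\abs{\nabla u}^p\abs{A}^2$ and the remaining bracket matches the left-hand side of \eqref{SZ_formula}; this is exactly what turns $(n-1)(p-1)$ into $(p-1)^2$ in \eqref{alpha_condition}. Relatedly, the gain in the radial case does not come from a sharper one-dimensional Hardy constant: it comes from the fact that $u_{r_y}^2=\abs{\nabla u}^2$ makes the usually discarded second term $\alpha\int\abs{\nabla u}^{-1}u_{r_y}^2\varphi^2r^{-\alpha}$ in \eqref{hardy_ineq} equal to the first, upgrading the constant $(n-1-\alpha)$ to $(n-1)$ (together with $\nabla_T\abs{\nabla u}=0$, which you do use). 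With these corrections, and handling the fact that the level sets are not closed by the compact support of the cutoff, your outline coincides with the paper's proof.
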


As we mentioned above, in order to prove Proposition \ref{prop_interior_estimate} we follow the strategy used in \cite{C1} for the problem with the Laplacian. The main ingredients are a geometric inequality for stable solutions to \eqref{plap_eq} and a Hardy inequality on the level sets of the function $u$. The first tool is originally due to Sternberg and Zumbrun \cite{SZ1,SZ2} for the case of the Laplacian. We will use a generalization of this inequality to the $ p $-Laplacian case, due to Farina, Sciunzi, and Valdinoci \cite{FSV,FSV1} and stated in Theorem~\ref{thm_SZ_stability} below.

The Hardy inequality that we use is originally due to Cabré \cite{C1}, but it can also be deduced from more general Hardy inequalities studied in \cite{CM} by Cabré and the author. In order to state these two results, we need to introduce some notation.

If $u$ is a $C^1$ solution to \eqref{plap_eq} and we consider the set of regular points of $u$, defined by $\{ x\in\Omega: \abs{\nabla u(x)}>0 \}$, then $u$ is $ C^2 $ in this set --- see Corollary 2.2 of \cite{DS} --- since the equation is uniformly elliptic in a neighborhood of every regular point.

Therefore, for any $x\in\Omega\cap\{\abs{\nabla u}>0\}$ we can define the level set of $u$ passing through $x$ as
\[\mathcal{L}_{u,x} :=\{y\in\Omega: u(y)=u(x)	\},\]
which is a $ C^2 $ embedded hypersurface of~$\R^n$.
In~$\{ x\in\Omega: \abs{\nabla u(x)}>0 \}$ we can define
\[
\nu:=\frac{\nabla u}{\abs{\nabla u}},
\]
which is the normal vector to the level sets of $ u $. Now, we can also introduce the notion of tangential gradient along the level sets. We define it for every function $ \varphi\in C^1(\Omega) $ as the projection of $ \nabla\varphi $ on the tangent space to the level sets passing through $ x $, i.e.
\[
\nabla_T\varphi:=\nabla\varphi-\left \langle \nabla \varphi, \nu \right \rangle \nu.
\]
For any $x\in\Omega\cap\{\abs{\nabla u}>0\}$ we denote with $ \kappa_i $ the $n-1$ principal curvatures of $ \mathcal{L}_{u,x} $ and
we recall that the mean curvature of the level sets is defined as
\[
H:=\sum_{i=1}^{n-1}\kappa_i.
\]
In the statement of the geometric property of stable solutions, the square of the second fundamental form of the level sets appears. It is defined as
\[
\abs{A}^2:=\sum_{i=1}^{n-1}\kappa^2_i.
\]
Now, we can state the geometric inequality for stable solutions to $ -\Delta_p u=f(u) $.
\begin{theorem}[Farina, Sciunzi, Valdinoci \cite{FSV, FSV1}]\label{thm_SZ_stability}
	Let $ p\in(1,+\infty)$, $ \Omega $ be a bounded smooth domain of $ \R^n $, $f$ a $ C^1 $ nonlinearity and $u$ a regular stable solution to~\eqref{plap_eq}. Then, for every $\eta\in C^1_c(\Omega)$ it holds that 
	\begin{equation}\label{SZ_formula}
	\begin{split}
	&\hspace{-2cm}\int_{\Omega\cap\{|\nabla u|>0\}}\Big((p-1)\abs{\nabla u}^{p-2}\abs{\nabla_T\abs{\nabla u}}^2+\abs{\nabla u}^p \abs{A}^2\Big)\eta^2\,dx 
	\\
	&\hspace{5cm}\leq (p-1)\int_{\Omega}\abs{\nabla u}^p\abs{\nabla \eta}^2\,dx.	
	\end{split}
	\end{equation}
\end{theorem}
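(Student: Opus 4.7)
\emph{Plan of proof.}
My strategy is to derive \eqref{SZ_formula} from the stability inequality by testing it with $\xi=c\eta$, where $c:=|\nabla u|$ and $\eta\in C_c^1(\Omega)$, and then eliminating the $f'(u)$-term via the linearized equation satisfied by $c$. On the regular set $\{c>0\}$ the solution $u$ is $C^2$ and the $p$-Laplace operator is uniformly elliptic; differentiating $-\Delta_p u=f(u)$ in each coordinate and re-combining shows that $c$ solves an equation of the form $-\mathcal{L}c=f'(u)c-\mathcal{R}/c$ there, where
\[
\mathcal{L}v := \operatorname{div}\bigl(c^{p-2}\nabla v + (p-2)c^{p-4}(\nabla u\cdot\nabla v)\nabla u\bigr)
\]
is the linearization of $\Delta_p$ at $u$, and $\mathcal{R}\ge 0$ is a pointwise quantity built from $D^2u$.

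Next, plug $\xi=c\eta$ into the stability inequality and expand via the product rule; this produces a term $c^2\,\mathcal{A}(\nabla\eta,\nabla\eta)$, a term $\eta^2\,\mathcal{A}(\nabla c,\nabla c)$, and a cross term $\sim c\eta\,\mathcal{A}(\nabla c,\nabla\eta)$, where $\mathcal{A}(V,W):=c^{p-2}V\cdot W+(p-2)c^{p-4}(\nabla u\cdot V)(\nabla u\cdot W)$. Integrating the cross term by parts and using the equation for $c$ above causes the $f'(u)c^2\eta^2$ contributions to cancel and the middle and cross terms to rearrange, leaving
\[
\int_{\{c>0\}} \mathcal{R}\,\eta^2\,dx \;\le\; \int_{\{c>0\}} c^2\,\mathcal{A}(\nabla\eta,\nabla\eta)\,dx.
\]
Cauchy-Schwarz, together with $|\nabla u\cdot\nabla\eta|\le c|\nabla\eta|$, gives $\mathcal{A}(\nabla\eta,\nabla\eta)\le(p-1)c^{p-2}|\nabla\eta|^2$, which is exactly the right-hand side of \eqref{SZ_formula}.

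The remaining task is to identify $\mathcal{R}$ with the geometric left-hand side. At a regular point I choose an orthonormal frame $\{e_1,\dots,e_{n-1},\nu\}$ with $\nu=\nabla u/c$; then $\partial_i c=u_{i\nu}$ for tangent indices, $\partial_\nu c=u_{\nu\nu}$, and the tangent-tangent block of $D^2u$ equals $-c\,\text{II}$, where $\text{II}$ denotes the second fundamental form of $\mathcal{L}_{u,x}$. A direct computation then yields the Hessian decomposition
\[
|D^2u|^2 = (\partial_\nu c)^2 + 2\,|\nabla_T c|^2 + c^2\,|A|^2.
\]
Substituting this identity, together with the explicit formula for the anisotropic pieces of $\mathcal{L}$ and $\mathcal{R}$, shows that the $(p-2)$-weighted contributions cancel out all pure normal-normal terms, leaving $\mathcal{R}=(p-1)\,c^{p-2}|\nabla_T c|^2+c^p|A|^2$, which is precisely the left-hand side of \eqref{SZ_formula}.

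The main obstacle is exactly this last pointwise matching: because the anisotropic term $(p-2)c^{p-4}(\nabla u\cdot\nabla c)^2$ couples normal and tangential derivatives of $c$, one must verify carefully that, after the integration by parts above, all normal-normal terms $(\partial_\nu c)^2$ cancel and only the tangential gradient and curvature terms survive. A secondary technical point is that $c$ is merely locally Lipschitz near the critical set $\{c=0\}$, so the manipulations must first be justified with $\xi=c_\varepsilon\eta$ (for instance with $c_\varepsilon=(c^2+\varepsilon)^{1/2}$), and one then passes to the limit as $\varepsilon\to 0^+$; this is the approximation argument carried out in detail in \cite{FSV,FSV1}.
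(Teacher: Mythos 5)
This theorem is not proved in the paper: it is quoted from \cite{FSV,FSV1}, so there is no in-paper argument to compare against. Your sketch follows the standard route of those references (test the stability form with $\abs{\nabla u}\eta$, use that the partial derivatives of $u$ solve the linearized equation on the regular set, cancel the cross terms and the $f'(u)$ terms, and identify the remainder via the Hessian decomposition on level sets), and the structure is sound. One simplification: the anisotropic term does not in fact couple normal and tangential derivatives of $c=\abs{\nabla u}$, since $\nabla u\cdot\nabla c=c\,\partial_\nu c$ is purely normal. Working with the vector $(u_1,\dots,u_n)$ (testing the $k$-th linearized equation with $u_k\eta^2$ and summing, rather than writing an equation for $c$ with a $\mathcal{R}/c$ source) avoids dividing by $c$ and gives directly
\[
\mathcal{R}=\sum_k\mathcal{A}(\nabla u_k,\nabla u_k)-\mathcal{A}(\nabla c,\nabla c)
= c^{p-2}\bigl(\abs{D^2u}^2-\abs{\nabla c}^2\bigr)+(p-2)c^{p-2}\bigl(\abs{\nabla c}^2-(\partial_\nu c)^2\bigr)
= c^{p}\abs{A}^2+(p-1)c^{p-2}\abs{\nabla_T c}^2,
\]
so your identification of the left-hand side is correct.

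The genuine gap is your final step: the pointwise bound $\mathcal{A}(\nabla\eta,\nabla\eta)\le(p-1)c^{p-2}\abs{\nabla\eta}^2$ is false when $1<p<2$. The eigenvalues of the form $\mathcal{A}$ are $c^{p-2}$ on $(\nabla u)^{\perp}$ and $(p-1)c^{p-2}$ along $\nabla u$, so Cauchy--Schwarz yields the constant $(p-1)$ only for $p\ge2$; for $p\in(1,2)$ one has $(p-1)<1$ and the best pointwise constant is $1$, i.e.\ $\mathcal{A}(\nabla\eta,\nabla\eta)\le\max\{1,p-1\}\,c^{p-2}\abs{\nabla\eta}^2$. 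Since the statement covers all $p\in(1,+\infty)$ with $(p-1)$ on the right-hand side, your argument proves it only for $p\ge2$ and, for $p<2$, delivers the weaker right-hand side $\int_\Omega\abs{\nabla u}^p\abs{\nabla\eta}^2\,dx$; you should either restrict to $p\ge2$ or check the precise constant in \cite{FSV} for the singular range. A second, technical point: the approximation near $\{\nabla u=0\}$ that you defer to \cite{FSV,FSV1} (admissibility of $u_k\eta^2$ as a test function in the weighted space, integrability of $\abs{\nabla u}^{p-2}\abs{D^2u}^2$) is where most of the actual work in those papers lies, so it cannot be dismissed in one sentence if the proof is meant to be self-contained.
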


As we mentioned above, this result is originally due to Sternberg and Zumbrun \cite{SZ1,SZ2} for stable solutions to $ -\Delta u=f(u) $ in a bounded smooth domain $ \Omega $ with $f\in C^1$. In this case, for every $\eta\in C^1_c(\Omega)$, the inequality reads
\begin{equation}\label{SZp=2}
\int_{\Omega\cap\{|\nabla u|>0\}}\Big(\abs{\nabla_T\abs{\nabla u}}^2+\abs{\nabla u}^2 \abs{A}^2\Big)\eta^2\,dx
\leq \int_{\Omega}\abs{\nabla u}^2\abs{\nabla \eta}^2\,dx.
\end{equation}

The idea of obtaining $ L^\infty $ bounds for stable solutions to $ -\Delta u=f(u) $ using \eqref{SZp=2} was used for the first time in \cite{C}. The key point in \cite{C} is the combination of \eqref{SZp=2} with the Michael-Simon and Allard inequality, applied on every level set of $u$.

A similar but simpler strategy is used in \cite{C1}, still for the classical problem with the Laplacian. It is based on a new geometric Hardy inequality on the foliation of hypersurfaces given by the level sets of $u$, a much simpler tool than the Michael-Simon and Allard inequality. 
In the present paper we extend this idea to the case of the $p$-Laplacian. 
We need both Theorem \ref{thm_SZ_stability} and the new Hardy inequality provided in~\cite{C1} to prove Proposition \ref{prop_interior_estimate}, which is the key estimate to prove Theorem \ref{thm_main}. 

We need to introduce some further notation in order to state the Hardy inequality on hypersurfaces of $\R^n$. For every $ y\in\R^n $, we define
\[
r_y=r_y(x)=\abs{x-y},
\]
and for every function $ \psi(x)\in C^1 $ we write its radial derivative as 
\[
\psi_{r_y}(x)=\frac{x-y}{\abs{x-y}}\cdot\nabla\psi(x).
\]
The geometric Hardy inequality is stated in the following theorem. Recall that, in the statement, the mean curvature $ H $ and the tangential gradient $ \nabla_T $ are referred to the level sets $ \mathcal{L}_{u,x} $ of $u$, which are $ C^2 $ embedded hypersurfaces of $ \R^n $ for every $x\in\Omega\cap\{\abs{\nabla u}>0\}$.
\begin{theorem}[Cabré \cite{C1}]\label{thm_hardy}
	Let $ \Omega\subset\R^n $ be a bounded smooth domain, $u\in C^1_c(\R^n)\cap C^2_c(\R^n\cap\{\abs{\nabla u}>0\})$, $ \alpha\in[0,n-1) $ and $ y\in\R^n $. Then, for every $ \varphi\in C^1_c(\R^n) $
	\begin{equation}\label{hardy_ineq}
	\begin{split}
	&\hspace{-0.1cm}(n-1-\alpha)\int_{\Omega}\abs{\nabla u}\varphi^2 r^{-\alpha}\,dx+\alpha\int_{\Omega}\abs{\nabla u}^{-1}u_{r_y}^2\varphi^2 r^{-\alpha}\,dx
	\\
	&\hspace{0.6cm}\leq \left(\int_{\Omega}\abs{\nabla u}\varphi^2 r^{-\alpha}\,dx\right)^\frac12\left(\int_{\Omega\cap\{|\nabla u|>0\}}\abs{\nabla u}\left(4\abs{\nabla_T\varphi}^2+\varphi^2\abs{H}^2\right)r^{-\alpha+2}\,dx\right)^\frac12.
	\end{split}
	\end{equation}
	In particular, if $ u $ is radial, then
	\begin{equation*}
	(n-1)^2\int_{\Omega}\abs{u_{r_y}}\varphi^2 r^{-\alpha}\,dx
	\leq \int_{\Omega\cap\{|u_{r_y}|>0\}}\abs{u_{r_y}}\left(4\abs{\nabla_T\varphi}^2+\varphi^2\abs{H}^2\right)r^{-\alpha+2}\,dx.
	\end{equation*}
\end{theorem}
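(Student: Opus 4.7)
The plan is to reduce \eqref{hardy_ineq} to a Hardy-type inequality on each regular level set of $u$ via the coarea formula, and then to prove that hypersurface inequality by one application of the tangential divergence theorem combined with two Cauchy--Schwarz inequalities. Since $u\in C^1_c(\R^n)$, the coarea formula gives $\int_\Omega \abs{\nabla u}\,\psi\,dx = \int_\R \bigl(\int_{M_t} \psi\,d\mathcal{H}^{n-1}\bigr)dt$ with $M_t:=\{u=t\}$. Applying this to each of the four bulk integrals in \eqref{hardy_ineq}, and using the identity $\abs{\nabla u}^{-1} u_{r_y}^2 = \abs{\nabla u}(\nu\cdot\hat r_y)^2$ on $\{\abs{\nabla u}>0\}$ (with $\hat r_y := (x-y)/\abs{x-y}$), it will suffice to prove, for a.e.\ regular value $t$ (by Sard's theorem $M_t$ is a smooth closed $C^2$ hypersurface), the pointwise-in-$t$ inequality
\begin{equation}\label{hyp_ineq}
(n-1-\alpha)\, A(t) + \alpha \tilde A(t) \,\leq\, \sqrt{A(t)}\,\sqrt{B(t)+4\,C(t)},
\end{equation}
where $A(t)=\int_{M_t}\varphi^2 r^{-\alpha}$, $\tilde A(t)=\int_{M_t}\varphi^2 r^{-\alpha}(\hat r_y\cdot\nu)^2$, $B(t)=\int_{M_t}\varphi^2 H^2 r^{-\alpha+2}$ and $C(t)=\int_{M_t}\abs{\nabla_T\varphi}^2 r^{-\alpha+2}$. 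Integrating \eqref{hyp_ineq} in $t$ and using the Cauchy--Schwarz inequality in $t$ on the right-hand side then produces \eqref{hardy_ineq}.

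For \eqref{hyp_ineq}, fix a regular level set $M = M_t$ and apply the tangential divergence theorem $\int_M \operatorname{div}_M X = \int_M H(\nu\cdot X)$ to $X := \varphi^2 r^{-\alpha}(x-y)$ (with a routine $\e$-cutoff near $y$, whose boundary contribution vanishes as $\e\to 0$ because $\alpha<n-1$). A direct computation based on $\operatorname{div}_M(x-y)=n-1$ together with the decomposition $x-y = ((x-y)\cdot\nu)\nu + (x-y)_T$ yields
\[
\operatorname{div}_M X = (n-1-\alpha)\varphi^2 r^{-\alpha} + \alpha\varphi^2 r^{-\alpha}(\hat r_y\cdot\nu)^2 + 2\varphi r^{-\alpha}(x-y)_T\cdot\nabla_T\varphi,
\]
while $\nu\cdot X = \varphi^2 r^{-\alpha+1}(\hat r_y\cdot\nu)$. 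Rearranging, the left-hand side of \eqref{hyp_ineq} equals
\[
\int_M H\varphi^2 r^{-\alpha+1}(\hat r_y\cdot\nu)\,d\mathcal{H}^{n-1} \;-\; 2\int_M \varphi r^{-\alpha}(x-y)_T\cdot\nabla_T\varphi\,d\mathcal{H}^{n-1}.
\]
Using $\abs{(x-y)_T} = r\sqrt{1-(\hat r_y\cdot\nu)^2}$, Cauchy--Schwarz bounds the first integral by $\sqrt{\tilde A\,B}$ and the second by $2\sqrt{(A-\tilde A)\,C}$. Then a planar Cauchy--Schwarz, $(\sqrt{\tilde A},\sqrt{A-\tilde A})\cdot(\sqrt B,2\sqrt C) \leq \sqrt{\tilde A+(A-\tilde A)}\,\sqrt{B+4C}$, collapses the sum to $\sqrt A\,\sqrt{B+4C}$, which is \eqref{hyp_ineq}.

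The radial case is immediate from this argument: if $u = u(r_y)$ then the level sets are spheres centered at $y$, so $\nu = \hat r_y$ and $A = \tilde A$, reducing \eqref{hyp_ineq} to $(n-1)^2 A \leq B + 4C$; integrating this in $t$ via coarea (no further Cauchy--Schwarz in $t$ is needed) gives the announced radial inequality. The main subtlety I anticipate is organizing the two Cauchy--Schwarz steps so that the normal component of $x-y$ pairs with $\sqrt{\tilde A\,B}$ and the tangential component with $2\sqrt{(A-\tilde A)\,C}$: once this pairing is recognized, the telescoping $\tilde A + (A-\tilde A) = A$ and the constant~$4$ in front of $\abs{\nabla_T\varphi}^2$ emerge automatically. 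Justifying the tangential divergence theorem in the presence of the mild singularity of $r^{-\alpha}$ is a standard cutoff exercise thanks to $\alpha < n-1$.
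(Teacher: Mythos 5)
The paper does not actually prove Theorem~\ref{thm_hardy}: it is imported verbatim from \cite{C1}, so there is no in-paper proof to compare against. That said, your argument is essentially Cabré's original one --- coarea reduction to a per-level-set inequality, the tangential divergence theorem applied to $X=\varphi^2 r^{-\alpha}(x-y)$, and the two Cauchy--Schwarz steps pairing the normal part of $x-y$ with the curvature term and the tangential part with $\abs{\nabla_T\varphi}^2$ --- and all the computations check out: $\operatorname{div}_M X$ is as you state, the weights $r^{-\alpha}$ and $r^{-\alpha+2}$ come out right, the constant $4$ appears exactly as you describe, and the radial case degenerates correctly to $(n-1)^2A\le B$.

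The one step you should not wave at is ``by Sard's theorem $M_t$ is a smooth closed $C^2$ hypersurface for a.e.\ $t$.'' The hypothesis is only $u\in C^1_c(\R^n)$ with $C^2$ regularity on $\{\abs{\nabla u}>0\}$; classical Sard for maps $\R^n\to\R$ requires $C^n$ regularity, and for merely $C^1$ functions the set of critical values can have positive measure. The coarea formula harmlessly discards the critical set from all four integrals (they all carry the weight $\abs{\nabla u}$), but your divergence-theorem step is applied to $M_t\cap\{\abs{\nabla u}>0\}$, which is an open portion of the level set and need not be a closed manifold: it can ``terminate'' where $\abs{\nabla u}\to0$, producing boundary terms you have not accounted for. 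This is handled in \cite{C1} by an additional approximation (e.g.\ cutting off near $\{\abs{\nabla u}=\e\}$ or formulating the divergence identity in the bulk over $\{a<u<b\}\cap\{\abs{\nabla u}>\e\}$ and letting $\e\to0$), and your write-up should include such a step; the $\e$-cutoff near $y$ that you do discuss is the easier of the two regularizations.
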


\section{Proof of the $ L^\infty $ bounds}\label{sec_proof}

We prove in this section our main results, namely Proposition \ref{prop_interior_estimate} and Theorem~\ref{thm_main}, using the geometric inequality for stable solutions and the Hardy inequality on level sets.

\begin{proof}[Proof of Proposition \ref{prop_interior_estimate}]
		We apply the geometric Hardy inequality of Theorem \ref{thm_hardy} to the function
		\[\varphi=\abs{\nabla u}^\frac{p-1}{2}\zeta, \]
		where $ \zeta$ is a positive smooth function that satisfies
		\begin{equation}\label{zeta}
		\zeta_{|\partial\Omega}=0\qquad\text{and}\qquad\zeta\equiv1\,\,\,\text{in}\,\,\, K_{\delta/2}. 
		\end{equation}
		To be completely rigorous, in the proof we should use
		\[
		\varphi_\varepsilon:=\left(\abs{\nabla u}^2+\varepsilon^2\right)^\frac{p-1}{4}\zeta
		\]
		instead of $ \varphi $, and then let $ \varepsilon\to0 $. We omit the details of this simple argument.
		
		To simplify notation, we define
		\[
		I:=\int_\Omega\abs{\nabla u}^p r_y^{-\alpha}\zeta^2\,dx;
		\]
		\[
		I_r:=\int_\Omega\abs{\nabla u}^{p-2}u_{r_y}^2 r_y^{-\alpha}\zeta^2\,dx.
		\]
		Plugging $ \varphi $ into \eqref{hardy_ineq}, we obtain
		\begin{equation}\label{66}
		\begin{split}
		&\left((n-1-\alpha)I+\alpha I_r\right)^2 
		\\
		&\hspace{2cm}\leq I\int_{\Omega\cap\{|\nabla u|>0\}}\abs{\nabla u}r_y^{-\alpha+2}\left(4\abs{\nabla_T\varphi}^2+\abs{\nabla u}^{p-1}\zeta^2\abs{H}^2\right)\,dx,
		\end{split}
		\end{equation}
		with $\alpha\in[0,n-1)$ to be chosen. The tangential gradient of $ \varphi $ can be computed as
		\[
		\nabla_T\varphi=\frac{p-1}{2}\zeta\abs{\nabla u}^\frac{p-3}{2}\nabla_T\abs{\nabla u}+\abs{\nabla u}^\frac{p-1}{2}\nabla_T\zeta,
		\]
		and the Cauchy-Schwarz inequality gives
		\[
		4\abs{\nabla_T\varphi}^2
		\leq
		(1+\varepsilon)(p-1)^2\zeta^2\abs{\nabla u}^{p-3}\abs{\nabla_T\abs{\nabla u}}^2
		+\frac{C}{\varepsilon}\abs{\nabla u}^{p-1}\abs{\nabla_T\zeta}^2,
		\]
		where $C$ is a positive universal constant, and $\varepsilon>0$ will be chosen later. Therefore, we get
		\begin{equation}\label{643}
		\begin{split}
		&\left((n-1-\alpha)I+\alpha I_r\right)^2 \leq (1+\varepsilon)I\int_{\Omega\cap\{|\nabla u|>0\}}\Big((p-1)^2\abs{\nabla u}^{p-2}\abs{\nabla_T\abs{\nabla u}}^2
		\\
		&\hspace{2cm}+\abs{\nabla u}^pH^2\Big)r_y^{-\alpha+2}\zeta^2\,dx
		+\frac{C}{\varepsilon}I\int_{\Omega\cap\{|\nabla u|>0\}}\abs{\nabla u}^p r_y^{-\alpha+2}\abs{\nabla_T\zeta}^2\,dx.
		\end{split}
		\end{equation}
		Concerning the last integral in \eqref{643}, we can control it in terms of the $L^p$-norm of the gradient of $u$ in a neighborhood of the boundary of $\Omega$, since $\abs{\nabla_T\zeta}$ has support in $(\Omega\setminus K_{\delta/2})\subset(\Omega\setminus K_{\delta})$. We also use that, since $y\in K_\delta$, we have
		\begin{equation}\label{r_bound}
		\delta/2<r_y(x)<\textnormal{diam}(\Omega)	\qquad\text{for every}\,\,x\in\Omega\setminus K_{\delta/2}.
		\end{equation}
		Therefore, we deduce the bound
		\begin{equation}\label{644}
		\int_{\Omega\cap\{|\nabla u|>0\}}\abs{\nabla u}^p r_y^{-\alpha+2}\abs{\nabla_T\zeta}^2\,dx
		\leq
		C\int_{\Omega\setminus K_\delta}\abs{\nabla u}^p\,dx,
		\end{equation}
		for some positive constant $C$ depending only on $\Omega$, $\delta$, and $\alpha$. Observe that we need both the upper and the lower bound on $ r_y(x) $ since a priori $ \alpha $ in \eqref{644} can be greater or smaller than 2.
		
		In the next step, we use that $ H^2\leq(n-1)\abs{A}^2$ and apply the geometric stability inequality \eqref{SZ_formula} in Theorem~\ref{thm_SZ_stability}. Observe that, to apply it, we need to have $(p-1)$ instead of $(p-1)^2$ in the first term in the right-hand side of \eqref{643}, and no constants in front of the term containing $\abs{A}^2$.
		This will force us to make a bound which differs whether $n$ is above or below $p$. For this reason, we distinguish the two cases.
		
		When $n> p$, we have $p-1< n-1$ and from \eqref{643} we deduce that
		\begin{equation}\label{99}
		\begin{split}
		&\hspace{-1.5cm}\left((n-1-\alpha)I+\alpha I_r\right)^2	 
		\\
		&\leq (1+\varepsilon)(n-1)I\int_{\Omega\cap\{\abs{\nabla u}>0\}}\Big((p-1)\abs{\nabla u}^{p-2}\abs{\nabla_T\abs{\nabla u}}^2
		\\
		&\hspace{1.5cm}+\abs{\nabla u}^p\abs{A}^2\Big)r_y^{-\alpha+2}\zeta^2\,dx
		+\frac{C}{\varepsilon}I\int_{\Omega\setminus K_\delta}\abs{\nabla u}^p\,dx.
		\end{split}
		\end{equation}
		
		Now, we can control the right-hand side of \eqref{99} using the geometric stability inequality~\eqref{SZ_formula} with test function
		\begin{equation}\label{test_FSV}
		\eta=r_y^\frac{2-\alpha}{2}\zeta.
		\end{equation}
		The following computations must be done with a regularization of $\eta$ in a small neighborhood of $y$, that we call $\eta_\varepsilon$. Since all terms in the rest of the proof are given by integrable functions, by dominated convergence we can let $\varepsilon\to0$ in all the integrals. For this reason, we directly write the computations with $ \eta $ instead of $ \eta_\varepsilon $.

		Plugging $\eta $ in \eqref{SZ_formula} and combining it with \eqref{99}, we obtain
		\begin{equation*}
		\begin{split}
		&\left((n-1-\alpha)I+\alpha I_r\right)^2 
		\\ 
		&\hspace{1.5cm}\leq (1+\varepsilon)(n-1)(p-1)I\int_{\Omega}\abs{\nabla u}^p\abs{\nabla\left(r_y^\frac{2-\alpha}{2}\zeta\right)}^2dx+\frac{C}{\varepsilon}I\int_{\Omega\setminus K_\delta}\abs{\nabla u}^p\,dx.
		\end{split}
		\end{equation*}
		Using again the Cauchy-Schwarz inequality, there exists a positive universal constant $C$ such that
		\[
		\abs{\nabla\left(r_y^\frac{2-\alpha}{2}\zeta\right)}^2
		\leq
		(1+\varepsilon)\frac{(\alpha-2)^2}{4} r_y^{-\alpha}\zeta^2
		+\frac{C}{\varepsilon}r_y^{2-\alpha}\abs{\nabla\zeta}^2,		
		\]
		again for the same $\e>0$ that we will choose later. Since we have chosen $ \zeta $ satisfying \eqref{zeta}, if $ n>p $ we get
		\begin{equation}\label{56}
		\begin{split}
		\left(n-1-\alpha\right)^2I^2&\leq\left((n-1-\alpha)I+\alpha I_r\right)^2
		\\ 
		&\leq(1+\varepsilon)^2(n-1)(p-1)\frac{(\alpha-2)^2}{4}I^2+\frac{C}{\varepsilon}I\norm{\nabla u}^p_{L^p(\Omega\setminus K_\delta)}.
		\end{split}
		\end{equation}
		
	If instead $ n\leq p $, the same procedure works --- including the same choice of test function \eqref{test_FSV} in the stability inequality \eqref{SZ_formula} --- but we have a difference in the constants. Indeed, in \eqref{643} we use that $ H^2\leq(n-1)\abs{A}^2\leq(p-1)\abs{A}^2$. 
		In this way, we can take $(p-1)$ out of the integral and obtain the right constants to apply the geometric stability inequality \eqref{SZ_formula}. As a consequence, instead of \eqref{56} we get
		\begin{equation}\label{57}
		\begin{split}
		\left(n-1-\alpha\right)^2I^2&\leq\left((n-1-\alpha)I+\alpha I_r\right)^2
		\\ 
		&\leq (1+\varepsilon)^2(p-1)^2\frac{(\alpha-2)^2}{4}I^2+\frac{C}{\varepsilon}I\norm{\nabla u}^p_{L^p(\Omega\setminus K_\delta)}.
		\end{split}
		\end{equation}
		
		Summarizing, if $\alpha\in[0,n-1)$ satisfies condition \eqref{alpha_condition}, then we can choose $ \varepsilon>0 $ in \eqref{56} or \eqref{57} such that
		\[
		\int_{K_{\delta/2}}\abs{\nabla u}^p r_y^{-\alpha}\,dx\leq C\norm{\nabla u}^p_{L^p(\Omega\setminus K_\delta)},
		\]
		for some positive constant $C$ depending only on $\Omega$, $\delta$, $p$, and $\alpha$. Finally, using \eqref{r_bound} and that $K_\delta\subset K_{\delta/2} $ we can control the integral over $ \Omega\setminus K_{\delta/2} $ with
		\[
		\int_{\Omega\setminus K_{\delta/2}}\abs{\nabla u}^p r_y^{-\alpha}\,dx\leq C\norm{\nabla u}^p_{L^p(\Omega\setminus K_\delta)},		
		\]
		proving \eqref{interior_ineq}. 
		
		Let us assume now that $ \Omega $ is a ball and $f$ is strictly positive in $(0,+\infty)$. Then Corollary 1.1 of \cite{DS} ensures that $u$ is radially symmetric and decreasing in the radius~$r$.
		Taking $y=0$, we have that $ I=I_r $. Furthermore,
		$ H^2=(n-1)\abs{A}^2 $ and $ \nabla_T\abs{\nabla u}=0 $, since $ \nabla u $ is orthogonal to the level sets. In this case, from \eqref{66} we deduce
		\begin{equation*}
		(n-1)I \leq \int_{\Omega\cap\{|\nabla u|>0\}}\abs{\nabla u}^{p}\abs{A}^2r_0^{-\alpha+2}\zeta^2\,dx,
		\end{equation*}
		instead of \eqref{99}. Therefore, under the less restrictive assumption \eqref{radial_alpha_condition}, we obtain~ \eqref{interior_ineq} with $y=0$ in the same way as in the nonradial case.
	\end{proof}

	Proposition~\ref{prop_interior_estimate} is the main tool in the proof of the interior estimate~\eqref{interior_bound}.
	In the following lemma, we introduce some boundary estimates that we will need to pass from~\eqref{interior_bound} to the global bound~\eqref{global_bound} in strictly convex domains.
	
	\vspace{3mm}
	\begin{proposition}[Castorina, Sanch\'on \cite{CasSa}]\label{prop_boundary}
		Let $ \Omega\subset\R^n$ be a bounded smooth domain, $f$ a positive $C^1$ nonlinearity, $ p\in(1,+\infty) $ and $u$ a positive regular solution to \eqref{plap_Dir}.
		
		If $ \Omega $ is strictly convex, then there exist positive constants $ \delta $ and $ \gamma $ depending only on the domain $ \Omega $, such that for every point $x$ with $ \textnormal{dist}(x,\partial\Omega)<\delta $, there exists a set $ I_x\subset\Omega$ of positive measure $ \gamma $ for which
		\begin{equation*}
		u(x)\leq u(y) \qquad \text{for every}\,\, y\in I_x.
		\end{equation*}
		In particular, 
		\begin{equation}\label{boundary_estimate}
		\norm{u}_{L^\infty(\Omega\setminus K_{\delta})}\leq \frac1\gamma\norm{u}_{L^1(\Omega)},
		\end{equation}
		where $ K_{\delta}:=\left\{x\in\Omega: \textnormal{dist}(x,\partial\Omega)\geq\delta	\right\} $.
	\end{proposition}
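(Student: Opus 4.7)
The plan is to obtain the pointwise comparison $u(x)\leq u(y)$ for $y\in I_x$ by a moving planes argument adapted to the $p$-Laplacian, following the strategy used by Castorina and Sanch\'on in \cite{CasSa}. Since $\Omega$ is a strictly convex smooth domain, $u>0$ in $\Omega$ with $u=0$ on $\partial\Omega$, and $-\Delta_p u = f(u)>0$, the weak and strong comparison principles for the $p$-Laplacian of Damascelli--Sciunzi type (see \cite{DS}) are available. First I would fix a direction $e\in\R^n$ with $\abs{e}=1$, consider the family of hyperplanes $T_\lambda=\{x\cdot e=\lambda\}$, and the reflection $R_\lambda$ across $T_\lambda$. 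Starting from a supporting hyperplane of $\Omega$ with outward normal $e$ and sliding it inward, strict convexity ensures that the reflected cap $R_\lambda(\Omega\cap\{x\cdot e>\lambda\})$ sits inside $\Omega$ as long as the cap remains thin. The moving planes procedure then delivers $u(x)\leq u(R_\lambda(x))$ throughout this cap.

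Next I would make the argument uniform in the boundary point. For each $x_0\in\partial\Omega$ let $e(x_0)$ denote the unit outward normal. By a compactness argument over $\partial\Omega$, strict convexity together with smoothness provides uniform constants $\delta>0$ and $r_0>0$, depending only on $\Omega$, such that for every $x_0\in\partial\Omega$ the moving planes argument in direction $e(x_0)$ is valid down to cap width $\delta$, and the reflected cap contains a ball of radius $r_0$ well inside $\Omega$. Then for every $x\in\Omega$ with $\textnormal{dist}(x,\partial\Omega)<\delta$, one picks a nearest boundary point $x_0$, reflects $x$ across the hyperplane through $x$ orthogonal to $e(x_0)$ to obtain a point $x^*$, and sets $I_x$ to be a ball around $x^*$ of radius comparable to $r_0$ still lying within the reflected cap. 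The moving planes comparison then yields $u(x)\leq u(y)$ for every $y\in I_x$, with $\abs{I_x}\geq\gamma$ for some $\gamma>0$ depending only on $\Omega$.

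The estimate \eqref{boundary_estimate} is then immediate: for every $x\in\Omega\setminus K_\delta$,
\[
\gamma\, u(x)\leq\int_{I_x}u(y)\,dy\leq\int_\Omega u(y)\,dy=\norm{u}_{L^1(\Omega)},
\]
where the second inequality uses that $u>0$ in $\Omega$.

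The main obstacle is the rigorous implementation of the moving planes method for the $p$-Laplacian, which cannot be carried out as in the semilinear case because of the degeneracy of the operator at critical points of $u$. One needs the weak comparison principles of Damascelli--Sciunzi together with a careful treatment of the critical set $\{\abs{\nabla u}=0\}$ in order to run the reflection/comparison loop. A secondary, more geometric, point is the uniformity of $\delta$ and $r_0$ across boundary points, which follows from strict convexity providing a uniform positive lower bound on the principal curvatures of $\partial\Omega$ and hence a uniform admissible cap width via a standard compactness argument on $\partial\Omega$.
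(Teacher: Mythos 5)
The paper itself offers no proof of this proposition: it is quoted from Castorina--Sanch\'on \cite{CasSa}, with only the remark that the argument rests on the moving planes procedure for the $p$-Laplacian developed in \cite{DS} and on the strict convexity of $\Omega$. That is exactly the strategy you adopt, and your final step --- deducing \eqref{boundary_estimate} by integrating the inequality $u(y)\ge u(x)$ over $I_x$ and using $u>0$ --- is correct. There is, however, a concrete gap in your construction of $I_x$. For a fixed direction $e$ and a fixed admissible hyperplane $T_\lambda$, the moving planes comparison yields $u(x)\le u\bigl(R_\lambda(x)\bigr)$ at the \emph{single} reflected point $R_\lambda(x)$; it does not give $u(x)\le u(y)$ for all $y$ in a ball of radius comparable to $r_0$ centered at $x^*=R_\lambda(x)$, which is what you assert when you ``set $I_x$ to be a ball around $x^*$''. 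Nothing in the reflection argument controls $u$ at points of that ball other than the reflected image of $x$ itself. (Also, as written you reflect across the hyperplane \emph{through} $x$, which would give $x^*=x$; presumably you mean a hyperplane strictly between $x$ and the interior.)

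To obtain a set of positive measure on which $u$ dominates $u(x)$, the reflection must be allowed to vary: strict convexity and smoothness of $\partial\Omega$ provide, uniformly in the boundary point $x_0$, a cone of directions $e$ close to the inner normal at $x_0$ and, for each such $e$, a range of admissible parameters $\lambda$. The moving planes inequality $u(x)\le u\bigl(R^e_\lambda(x)\bigr)$ holds for all these choices simultaneously, and the reflected images $R^e_\lambda(x)$ sweep out a truncated cone $I_x$ whose measure is bounded below by a constant $\gamma$ depending only on $\Omega$. Equivalently, moving planes shows that $u$ is nondecreasing along every direction in a uniform cone as one moves inward from the boundary, and $I_x$ is the corresponding truncated cone with vertex at $x$. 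With this corrected construction of $I_x$, the rest of your argument --- the uniformity of $\delta$ and $\gamma$ via compactness of $\partial\Omega$, and the final integration --- goes through and coincides with the proof in \cite{CasSa}.
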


	The proof of this lemma --- which can be found in \cite{CasSa} --- is based on a moving planes procedure for the $p$-Laplacian developed in \cite{DS}. For this method to work, the strict convexity assumption about $\Omega$ is crucial.
	
	We can now prove our main result.
		
	\begin{proof}[Proof of Theorem \ref{thm_main}]
		Assume that there exists a nonnegative exponent $ \alpha $ satisfying~\eqref{alpha_condition} such that
		\begin{equation*}
		n-p<\alpha<n-1.
		\end{equation*}
		The existence of such an exponent $ \alpha $ depends on the values of $n$ and $ p $ and, in particular, it is ensured when we assume that $n$ and $p$ satisfy \eqref{condition} --- see Appendix~\ref{appendix} for all the details.
		
		As a consequence of Proposition \ref{prop_interior_estimate}, for every $ y\in K_\delta $ we obtain that
		\begin{equation}\label{32}
		\int_{\Omega}\abs{\nabla u(x)}^pr_y(x)^{-\alpha}\,dx\leq C\norm{\nabla u}^p_{L^p(\Omega\setminus K_\delta)},
		\end{equation}
		for some constant $ C>0 $ depending only on $ \Omega $, $ \delta $, and $p$.
		
		In the radial case, Proposition \ref{prop_interior_estimate} gives \eqref{32} with $y=0$ for some nonnegative exponent $\alpha\in(n-p,n-1) $ satisfying the less restrictive condition \eqref{radial_alpha_condition}. It can be checked that such an an exponent $ \alpha$ exists whenever $n$ and $p$ satisfy \eqref{radial_condition} --- see Remark \ref{rmk_appendix} in the appendix.
		
		Summarizing, in both the radial and the nonradial case --- under different assumptions on $ n $ and $ p $ --- we have \eqref{32} for some nonnegative $ \alpha>n-p $, and we want to deduce~\eqref{interior_bound} and~\eqref{global_bound}.
				
		In order to prove the interior bound \eqref{interior_bound}, for every point $ y\in K_\delta $ we use Lemma~7.16 of~\cite{GT} for the set $B_y:=B_{\delta/2}(y)$, obtaining
		\begin{equation*}
		\abs{u(y)-\overline{u}_{B_y}}\leq C \int_\Omega \abs{\nabla u}r_y^{1-n}\,dx.
		\end{equation*}
		Here $C$ is a positive constant depending only on $n$ and $\delta$, and $\overline{u}_{B_y}$ is the mean of $ u $ over the set $ B_y $, defined by
		\[
		\overline{u}_{B_y}:=\frac{1}{\abs{B_y}}\int_{B_y}u\,dx.
		\]
		Then, applying the Hölder inequality with exponents $ p $ and $ p' $ it follows that
		\begin{equation}\label{35}
		\abs{u(y)-\overline{u}_{B_y}}\leq C\left(\int_{\Omega}\abs{\nabla u}^pr_y^{-\alpha}\,dx\right)^\frac1p\left(\int_{\Omega}r_y^{\frac{p-np+\alpha}{p-1}}\,dx	\right)^\frac1{p'}.
		\end{equation}
		The last integral is bounded, since $ \alpha>n-p $ and
		\[
		\int_{\Omega}r_y(x)^{\frac{p-np+\alpha}{p-1}}\,dx\leq\abs{S^{n-1}}\frac{p-1}{\alpha-n+p}\,\textnormal{diam}(\Omega)^\frac{\alpha-n+p}{p-1}.
		\]
		Now, using \eqref{32} and \eqref{35} we can conclude that
		\begin{equation}\label{37}
		\norm{u}_{L^\infty(K_\delta)}\leq C\left(\norm{u}_{L^1(\Omega)}+\norm{\nabla u}_{L^p(\Omega\setminus K_\delta)}	\right),
		\end{equation}
		which is \eqref{interior_bound}, with $ C $ depending only on $ \Omega $, $p$, and $ \delta $. 

Assume now that $ \Omega $ is strictly convex, $u$ is a positive solution of problem \eqref{plap_Dir} and $f$ is positive in $(0,+\infty)$. Then, Proposition \ref{prop_boundary} gives the boundary estimate
\begin{equation}\label{90}
\norm{u}_{L^\infty(\Omega\setminus K_{2\delta})}\leq \frac{1}{2\delta}\norm{u}_{L^1(\Omega)},
\end{equation}
where $\delta$ is a positive constant that depends only on $\Omega$.
We use this bound to control $ f(u) $ in the set $ \Omega\setminus K_{2\delta}$. By interior and boundary regularity\footnote{See Theorem 1 in \cite{DiB} or Theorem 1 in \cite{T} for interior $ C^{1,\beta} $ regularity in the style of De Giorgi and Theorem 1 in \cite{L} for boundary regularity. See also Appendix E in \cite{P}.
 } 
for problem \eqref{plap_Dir}, we deduce stronger estimates in the set $ \Omega\setminus K_{\delta} $, which is contained in $\Omega\setminus K_{2\delta}$. In particular, we have $ \norm{\nabla u}_{L^\infty(\Omega\setminus K_{\delta})}\leq C$, for some constant $C$ which depends only on $ \Omega $, $ f $, $p$ and $ \norm{u}_{L^1(\Omega)} $. Combining this with \eqref{37} we obtain \eqref{global_bound}, since we also have that 
\[\norm{u}_{L^\infty(\Omega\setminus K_{\delta})}\leq\norm{u}_{L^\infty(\Omega\setminus K_{2\delta})}\leq C\norm{u}_{L^1(\Omega)}.\]

If $\Omega$ is a ball and $f$ is strictly positive in $(0,+\infty)$, then from Corollary 1.1 of \cite{DS} we know that $u$ is radially symmetric and decreasing in the radius $r$. Therefore, it is sufficient to estimate $u(0)$. From \eqref{32} with $y=0$, we obtain
\begin{equation*}
\abs{u(0)}\leq C\left(\norm{u}_{L^1(\Omega)}+\norm{\nabla u}_{L^p(\Omega\setminus K_\delta)}	\right).
\end{equation*}
Then, proceeding in the same way as in the nonradial case we deduce \eqref{global_bound}.
\end{proof}

\appendix
\section{}\label{appendix}
In this appendix we show the existence of a nonnegative $\alpha\in(n-p,n-1)$ satisfying~\eqref{alpha_condition} whenever $n$ and $p$ satisfy~\eqref{condition}, completing in this way the argument of the proof of Theorem \ref{thm_main}. We distinguish two cases.

\textbf{Case 1, $\boldsymbol{n>p}$.}
We take\footnote{The following argument gives that our condition \eqref{condition} is the optimal one for the existence of some $\alpha\in(n-p,n-1)$ satisfying \eqref{alpha_condition}. To see this in the case $n\geq4$ (otherwise it is simple), we may assume $n>p+2$, since \eqref{condition} already includes $n\leq p+2$. But then, since $\alpha\in(n-p,n-1)$ we also have $\alpha\in(2,n-1)$ and therefore, if \eqref{alpha_condition} is satisfied by some $\alpha\in(2,n-1)$, then it is also satisfied by any smaller $\alpha$ in this interval. Thus, in the argument, our choice $\alpha=n-p+\e$ for small $\e>0$ imposes non restriction.
} $\alpha=n-p+\e$ for some $\e\in(0,p-1)$, and we plug it in~\eqref{alpha_condition}, obtaining 
\begin{equation*}
4(p-1-\e)^2>(p-n+2-\e)^2(n-1)(p-1).
\end{equation*}
If the inequality holds with $\e=0$, then it also holds for an arbitrary small~$\e>0$. In this way, we are reduced to check that whenever $n$ and $p$ satisfy~\eqref{condition}, then
\begin{equation}\label{alpha_ineq_n>p}
4(p-1)>(p-n+2)^2(n-1).
\end{equation}
This inequality is cubic in $n$, but quadratic in $p$. Solving it with respect to $p$, and exploiting also a surprising cancellation in the discriminant, we obtain
\[
\frac{n^2-5n+8}{n-1}<p<n.
\]
Observe that this forces $n>2$. For $n\geq3$, we solve it with respect to $n$ and find
\begin{equation}\label{n_bound}
\frac12\left(p+5-\sqrt{(p-1)(p+7)}\right)<n<\frac12\left(p+5+\sqrt{(p-1)(p+7)}\right).
\end{equation}
If $n=3$, both inequalities hold true for every $p<3=n$.
If $n\geq4$ instead, the lower bound in \eqref{n_bound} is always verified, and the upper bound on~$n$ is the one appearing in~\eqref{condition}. 




\textbf{Case 2, $\boldsymbol{n\leq p}$.} In this case, inequality \eqref{alpha_condition} reads
\begin{equation}\label{alpha_n<p_ineq}
4(n-1-\alpha)^2>(\alpha-2)^2(p-1)^2.
\end{equation}
For $n=2,3$ one can directly check that no nonnegative solutions $\alpha\in(n-p,n-1)$ exist. Indeed, when $n=2$ we can take the square root of \eqref{alpha_n<p_ineq} and check that the solutions $\alpha$ are either strictly negative or greater than 1. When $n=3$ instead, \eqref{alpha_n<p_ineq} contradicts the assumption $p\geq n=3$.

For $n\geq4$, we are going to see that, for every $p\geq n$, there exists a nonnegative $\alpha\in(n-p,n-1)$ satisfying \eqref{alpha_n<p_ineq}. For this, it suffices to look for $\alpha$ belonging to~$(2,n-1)$. We can now take the square root of \eqref{alpha_n<p_ineq} and solve the inequality with respect to $\alpha$. In this way, we find
\[
\alpha<\frac{2(n+p-2)}{p+1}
\]
and one can easily check that $2(n+p-2)/(p+1)>2$ for all $p>1$, since we are assuming $n\geq4$.

\begin{remark}\label{rmk_appendix}
	The same ideas --- including the same choice of $\alpha$ when $n>p$ --- can be used to check that in the radial case there exists a nonnegative $\alpha\in(n-p,n-1)$ satisfying~\eqref{radial_alpha_condition} whenever $n$ and $p$ satisfy \eqref{radial_condition}. The only difference is that in the case $n>p$ we get an inequality which is quadratic in $n$ and cubic in~$p$, and we can directly solve it with respect to $n$, finding $p<n<p+4p/(p-1)$.
\end{remark}

\vspace{0.1cm}
\section*{Acknowledgments} The author thanks Xavier Cabré for his guidance and useful discussions on the topic of this paper.
\vspace{0.1cm}

\end{document}